\newtheorem{theorem}{Theorem}
\newtheorem{proposition}{Proposition}
\theoremstyle{definition}
\newtheorem{example}{Example}
\theoremstyle{remark}
\newtheorem{remark}{Remark}
\numberwithin{equation}{section}
\newcommand{\calC}{\mathcal{C}}
\newcommand{\calD}{\mathcal{D}}
\newcommand{\calH}{\mathcal{H}}
\newcommand{\calL}{\mathcal{L}}
\newcommand{\calN}{\mathcal{N}}
\newcommand{\calO}{\mathcal{O}}
\newcommand{\calS}{\mathcal{S}}
\newcommand{\frakS}{\mathfrak{S}}
\newcommand{\frakA}{\mathfrak{A}}
\newcommand{\bbA}{\mathbb{A}}
\newcommand{\bbV}{\mathbb{V}}
\newcommand{\bbS}{\mathbb{S}}
\newcommand{\bbF}{\mathbb{F}}
\newcommand{\bbC}{\mathbb{C}}
\newcommand{\bbP}{\mathbb{P}}
\newcommand{\bbW}{\mathbb{W}}
\newcommand{\bbU}{\mathbb{U}}
\newcommand{\bfx}{\mathbf{x}}
\newcommand{\sfA}{\mathsf{A}}
\newcommand{\sfC}{\mathsf{C}}
\newcommand{\sfS}{\mathsf{S}}
\newcommand{\sfM}{\mathsf{M}}
\newcommand{\la}{\langle}
\newcommand{\ra}{\rangle}
\newcommand{\SL}{\textrm{SL}}
\newcommand{\Aut}{\textup{Aut}}
\newcommand{\GL}{\textup{GL}}
\newcommand{\PSL}{\textup{PSL}}
\newcommand{\St}{\textup{St}}
\newcommand{\da}{\dasharrow}
\renewcommand\emptyset\varnothing
\newcommand{\beq}{\begin{equation}}
\newcommand{\eeq}{\end{equation}}
\begin{document}
\title[Quartic surfaces]{Quartic surfaces with icosahedral symmetry}

\author{Igor V. Dolgachev}
\address{Department of Mathematics, University of Michigan, 525 E. University Av., Ann Arbor, Mi, 49109, USA}
\email{idolga@umich.edu}


\begin{abstract} We study smooth quartic surfaces in $\bbP^3$ which admit a group of projective automorphisms isomorphic to the icosahedron group.
\end{abstract}

\maketitle

\section{Introduction}
Let $\frakA_5$ be the icosahedron group isomorphic to the alternating group in 5 letters. Starting from Platonic solids, it  appears as an omnipresent  symmetry group in geometry.  In this article, complementing   papers \cite{Cheltsov1}, \cite{Cheltsov2}, we  discuss families of smooth quartic surfaces in $\bbP^3$ that admit the group $\frakA_5$ as its group of projective symmetries. 

It follows from loc.cit. that any smooth quartic surface $S$ with a faithful action of $\frakA_5$ belongs to one of the two pencils of invariant quartic surfaces. One of them arises from a linear irreducible $4$-dimensional representation of $\frakA_5$ and  was studied with  great details by K. Hashimoto in \cite{Hashimoto}.  It contains a double quadric and four surfaces with 5,10,10, or 15 ordinary double points. The other pencil arises from a faithful linear representation of the binary icosahedron group $2.\frakA_5$. As was shown in \cite{Cheltsov1} and \cite{Cheltsov2}, it contains two singular surfaces singular along its own rational normal cubic and two surfaces with 10 ordinary double points. In this paper we show first that one of the surfaces with 10 ordinary double points from  Hashimoto's pencil can be realized as a Cayley quartic symmetroid defined by a $\frakA_5$-invariant web $W$ of quadrics.\footnote{This result was independently obtained by S. Mukai.} We also show that the Steinerian surface of this web parametrizing singular points of singular quadrics in the web coincides with one of two smooth member of the second pencil that admits a larger group of projective symmetries isomorphic to $\frakS_5$. To see this we give an explicit equation of the second pencil. We also show that the apolar linear system of quadrics to the web $W$ contains two invariant rational normal curves that give  rise to two rational plane sextics with symmetry $\frakA_5$ discovered by R. Winger \cite{Winger1}, \cite{Winger2}.

It is my pleasure to thank   I. Cheltsov,  B. van Geemen, K. Hulek and  S. Mukai for their help in collecting the known information about the subject of this paper. I would like also to thank the referee for careful reading of the manuscript and useful remarks.

\section{Two irreducible $4$-dimensional representations}
Let $E$ be a $4$-dimensional linear space over an algebraically closed field $\Bbbk$ of characteristic $\ne 2,3,5$ and $|E| \cong \bbP^3$ be the projective space of lines in $E$. Assume that $E$ is a non-trivial projective linear representation space for $\frakA_5 \cong \PSL(2,\bbF_5)$ in $|E|$. Then it originates from either linear representation of $\frakA_5$ in $E$, or from a linear representation of its central double extension 
$2.\frakA_5 \cong \SL(2,\bbF_5)$, the \emph{binary icosahedral group}. Then $E$ is either an irreducible representation, or contains one-dimensional trivial representation, or  decomposes into the sum of two irreducible two-dimensional representations. 

\begin{proposition} Suppose there exits  a smooth quartic surface $S$ in $|E|$ which is invariant with respect to a non-trivial projective representation of $\frakA_5$. Then $E$ is an irreducible representation of $G = \frakA_5$ or $G = 2.\frakA_5$.
\end{proposition}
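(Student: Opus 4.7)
The plan is to enumerate the reducible possibilities for $E$ and show that none admits a smooth $G$-invariant quartic.

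Suppose first that $E$ contains a trivial $1$-dimensional summand. Since $\frakA_5$ is perfect, neither $\frakA_5$ nor $2.\frakA_5$ has any non-trivial $1$-dimensional representation, and the center of $2.\frakA_5$ must act by a single scalar on $E$ for the projective action of $\frakA_5$ to be well-defined; one is thus forced into $G = \frakA_5$ and $E \cong \mathbf{1} \oplus V_3$ with $V_3$ a $3$-dim irreducible. Choose coordinates with $\mathbf{1} = \la x_0\ra$ and $V_3 = \la x_1, x_2, x_3\ra$. The classical icosahedral invariant ring $\bbk[V_3^*]^{\frakA_5}$ is generated in degrees $2, 6, 10, 15$, with a unique degree-$2$ generator $q$ and no generators in degrees $1$ or $3$. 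Hence every $G$-invariant quartic on $E$ takes the form
\[
f \;=\; c_0 x_0^4 + c_1 x_0^2 q + c_2 q^2,
\]
a binary quadratic form in $x_0^2$ and $q$; over the algebraically closed $\bbk$ it factors as the product of two members of the pencil spanned by $x_0^2$ and $q$. Thus $S$ is a reducible or doubled union of two quadrics, never a smooth quartic.

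Suppose instead that $E \cong V \oplus V'$ is a sum of two irreducible $2$-dim representations of $G = 2.\frakA_5$, so $V, V' \in \{V_2, V_2'\}$. If $V \cong V'$, write $E = V_2 \otimes \bbk^2$ with trivial action on $\bbk^2$. By the Klein invariant theorem binary icosahedral invariants on $\bbk[V_2^*]$ start in degree $12$, and $\Sym^2 V_2$ is a non-trivial $3$-dim irreducible of $\frakA_5$; a short character computation then shows $\Sym^4 E^{*G}$ is $1$-dimensional, spanned by $(\det M)^2$ where $M$ is the $2\times 2$ matrix of coordinates on $V_2 \otimes \bbk^2$. Its zero locus is the non-reduced doubling of a smooth Segre quadric and hence not a smooth quartic. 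If $V \not\cong V'$, i.e., $E = V_2 \oplus V_2'$, bigrade $\Sym^4 E^*$ by the $(i,j)$ bidegree on $V_2^* \oplus V_2'^*$. The extreme components $(4,0)$ and $(0,4)$ have no $G$-invariants by Klein; the middle $(2,2)$ piece $\Sym^2 V_2 \otimes \Sym^2 V_2'$ descends as an $\frakA_5$-representation to the tensor product of the two non-isomorphic $3$-dim irreducibles, which is isomorphic to the sum $V_4 \oplus V_5$ of the $4$- and $5$-dim irreducibles of $\frakA_5$ and so contains no trivial summand; and the $(3,1)$, $(1,3)$ pieces vanish because $\Sym^3 V_2$ is the unique faithful $4$-dim irreducible of $2.\frakA_5$ and therefore not isomorphic to $V_2'$. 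Hence $\Sym^4 E^{*G} = 0$ and no $G$-invariant quartic exists at all.

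The main obstacle is the representation-theoretic bookkeeping in the case $E = V_2 \oplus V_2'$: verifying that $\Sym^3 V_2$ remains irreducible as a $4$-dim faithful representation of $2.\frakA_5$, and that the tensor product of the two $3$-dim irreducibles of $\frakA_5$ contains no trivial summand. Both follow routinely from the character table of $2.\frakA_5$, where the Galois symmetry $\sqrt{5} \leftrightarrow -\sqrt{5}$ interchanges $V_2$ with $V_2'$ and the two $3$-dim irreducibles; with these in hand, the arguments in both reducible cases are essentially formal.
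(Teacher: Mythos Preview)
Your proof is correct and takes a genuinely different route from the paper's. The paper argues geometrically: for $E \cong \mathbf{1}\oplus V_3$ it invokes Mukai's count of six isolated fixed points for an order-$3$ automorphism of a smooth quartic, matches these against the fixed locus of $g$ in $|E|$, and obtains a contradiction because $\frakA_5$ would then act faithfully on the $2$-dimensional tangent space at the global fixed point; for $E$ a sum of two $2$-dimensional irreducibles it notes that the two invariant lines would meet $S$ in $\frakA_5$-invariant subsets of size at most four, which the subgroup lattice of $\frakA_5$ forbids. You instead compute $(S^4E^\vee)^G$ outright in each reducible case and show every invariant quartic is reducible or non-reduced.

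Your approach is more elementary and more uniform: it replaces the appeal to Mukai's theorem by the classical fact that the icosahedral invariants on $V_3$ are generated in degrees $2,6,10,15$, and it works directly in every characteristic $\ne 2,3,5$. You also handle the sub-case $E\cong V_2\oplus V_2$ explicitly (where there is a full pencil of invariant lines rather than two, so the paper's phrasing does not literally apply), and in the $V_2\oplus V_2'$ case you prove the stronger statement that no invariant quartic exists at all, which sidesteps the issue of whether an invariant line might lie on $S$. The paper's argument is shorter to state and more geometric in flavor, but yours is the more self-contained of the two.
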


\begin{proof} Suppose $E$ has one-dimensional trivial summand. It is known that an element of order 3 acting on a smooth quartic surface has 6 isolated fixed points (see \cite{Mukai}). A glance at the character table of the groups shows that an element $g$ of order 3 in $G$ in its action on  the $3$-dimension summand has three different eigenvalues, one of them is equal to $1$.  This shows that $g$ has a pointwise fixed line in $|E|$ and two isolated fixed points. The line intersects the quartic at 4 points, so all the fixed points are accounted for. In particular, one of them is fixed with respect to the whole group $\frakA_5$.  However, the assumption on the characteristic implies that $G$ acts 
faithfully on the tangent space of $S$ at this point, and the group $\frakA_5$ has no non-trivial 2-dimensional linear representations. 

Now, let us consider case when $E$ decomposes into the sum of two irreducible $2$-dimensional representations. This could happen only when $G = 2.\frakA_5$. In this case $G$ has two invariant lines in $|E|$, hence the union of two invariant  sets of $\le 4$-points on $S$. The known possible orders of subgroups of $\frakA_5$ shows that this is impossible. 
\end{proof}

So, we are dealing with projective representations of the group $\frakA_5$ coming from an irreducible four-dimensional representation $\bbW_4$ of $\frakA_5$ or from an irreducible faithful $4$-dimensional linear representation $\bbU_4$ of $2A_5$.

It is known that the group $2.\frakA_5$ has two irreducible 2-dimensional representation $\bbV$ and $\bbV'$, both self-dual but one is obtained from another by the composition with an outer automorphism of $\frakA_5$. The center acts as the minus identity. Since it acts as the identity on the symmetric squares $S^2\bbV$ or $S^2\bbV'{}$, the groups $\frakA_5,2.\frakA_5$ admit two 3-dimensional representations, both of them are irreducible and self-dual, they  differ from each other by an outer automorphism.

The group  $\frakA_5$ acts in  $|S^2\bbV^\vee|$ via the Veronese map of 
$|\bbV|\to |S^2\bbV|$ of its natural action on  $|\bbV|\cong \bbP^1$.
The restriction of the representation $S^2\bbV$ of $\frakA_5$ to its subgroup $H$ of order 10 isomorphic to the  dihedral group $D_{10}$ has trivial one-dimensional summand. It defines a fixed point of $H$ in its action in the plane $|S^2\bbV|$. Its orbit of $\frakA_5$ consists of six points,  called by F. Klein, the \emph{fundamental points}. The linear system of plane cubics is invariant with respect to $\frakA_5$ and defines an irreducible  subrepresentation of $S^3(S^2\bbV)^\vee$ of dimension $4$.  The image of the plane under the map defined by the linear system of cubics is isomorphic to the \emph{Clebsch diagonal cubic surface} representing the unique isomorphism class of a nonsingular cubic surface with the automorphism group isomorphic to $\frakS_5$ \cite{CAG}, 9.5.4. The representation $\bbW_4$ is isomorphic to the restriction of the \emph{standard irreducible representation} of $\frakS_5$ realized in the hyperplane $x_1+\cdots+x_5 = 0$ in $\Bbbk^5$. 

The representation $\bbU_4$ is realized in the third symmetric power $S^3\bbV$ of $\bbV$. It is self-dual and isomorphic to $S^3\bbV'$. The projective representation of $\frakA_4$ in $|\bbU_4|$ is obtained via the Veronese map $|\bbV|\to |S^3\bbV|$  from the natural action of $\frakA_5$ on $|\bbV|$.

Using the character table for $G$ one obtains the following.

\begin{proposition} Let $S^4\bbW^\vee$ and $S^4\bbU_4$ be the fourth symmetric power of $\bbW_4$ and of $\bbU_4$, and let $()^G$ denote  the subspace of $G$-invariant elements. Then 
$$\dim (S^4\bbW^\vee)^G = \dim (S^4\bbU_4^\vee)^G = 2.$$
\end{proposition}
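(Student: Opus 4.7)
The plan is a direct character-theoretic calculation. Because the multiplicity of the trivial representation in a $G$-module equals its multiplicity in the dual, $\dim (S^4 V^\vee)^G = \dim (S^4 V)^G$, and I will work with the latter via
\[
\dim (S^4 V)^G \;=\; \frac{1}{|G|}\sum_{[g]} |C_g|\,\chi_{S^4 V}(g),
\]
combined with the Newton-style identity
\[
\chi_{S^4 V}(g) \;=\; \tfrac{1}{24}\bigl(p_1^{4} + 6\,p_1^{2}\,p_2 + 3\,p_2^{2} + 8\,p_1 p_3 + 6\,p_4\bigr), \qquad p_k := \chi_V(g^k).
\]
Once $\chi_V$ and the power maps on conjugacy classes are in hand, the computation is purely arithmetical.

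For $G=\frakA_5$ and $V=\bbW_4$ I will read directly off the character table: the five classes $1A,2A,3A,5A,5B$ have sizes $1,15,20,12,12$ and $\chi_{\bbW_4}=(4,0,1,-1,-1)$. Squaring interchanges the split classes $5A$ and $5B$, but both take the same value on $\bbW_4$, so the $p_k$'s are immediate on every class. A short class-by-class computation yields $\chi_{S^4\bbW_4}=(35,3,2,0,0)$, and hence
\[
\dim (S^4\bbW_4)^{\frakA_5} \;=\; \tfrac{1}{60}\bigl(35 + 15\cdot 3 + 20\cdot 2 + 0 + 0\bigr) \;=\; 2.
\]

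For $G=2.\frakA_5$ and $V=\bbU_4 = S^3\bbV$ the situation looks more delicate a priori ($2.\frakA_5$ has nine conjugacy classes, with non-trivial power maps between preimage classes), but a useful reduction is available. The central element $-I$ acts on $\bbV$ by $-1$, hence on $\bbU_4$ by $(-1)^3=-1$, and therefore on $S^4\bbU_4$ by $(-1)^4=+1$. Consequently the $2.\frakA_5$-action on $S^4\bbU_4$ factors through $2.\frakA_5/\langle -I\rangle\cong\frakA_5$, and it suffices to average the character over $\frakA_5$. Lifting the five $\frakA_5$-classes to $2.\frakA_5$ and computing $\chi_{\bbU_4}=\chi_{S^3\bbV}$ via $\chi(g)=\lambda^3+\lambda^2\mu+\lambda\mu^2+\mu^3$ in the eigenvalues $\lambda,\mu$ of $g$ on $\bbV$ produces the $\frakA_5$-character $(4,0,1,-1,-1)$ — identical to that of $\bbW_4$. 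Hence $S^4\bbU_4\cong S^4\bbW_4$ as $\frakA_5$-modules, and the same computation yields invariant dimension $2$.

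The arithmetic is routine; the only real point of care is the bookkeeping of power maps — squaring on the split $5$-classes in $\frakA_5$, and, for $2.\frakA_5$, which preimage class $g^k$ lands in. The factoring of $S^4\bbU_4$ through an $\frakA_5$-representation above neatly sidesteps the latter complication and makes the two calculations formally identical.
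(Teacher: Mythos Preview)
Your approach is exactly what the paper has in mind: the paper's entire proof is the single sentence ``Using the character table for $G$ one obtains the following,'' and your explicit averaging of $\chi_{S^4 V}$ is the natural way to unpack that. The $\bbW_4$ computation is clean and correct.

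There is, however, a small logical slip in your shortcut for $\bbU_4$. You argue: $\chi_{\bbU_4}$ takes the values $(4,0,1,-1,-1)$ on suitable lifts of the $\frakA_5$-classes, these coincide with $\chi_{\bbW_4}$, \emph{hence} $S^4\bbU_4\cong S^4\bbW_4$ as $\frakA_5$-modules. But matching $\chi_V$ on one lift per class is not enough to force the $p_k$'s to match, because $p_k(g)=\chi_V(\tilde g^{\,k})$ depends on where the \emph{powers} of the lift land, and the power maps in $2.\frakA_5$ and $\frakA_5$ differ. Concretely: the lift $\tilde g$ of an $\frakA_5$-involution has order $4$ with $\tilde g^{\,2}=-I$, so $p_2=\chi_{\bbU_4}(-I)=-4$, whereas for $\bbW_4$ one has $p_2=\chi_{\bbW_4}(1)=4$. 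The two disagree. It \emph{happens} that on this class $p_1=p_3=0$, so only $p_2^{\,2}$ and $p_4$ enter the $S^4$ formula and the sign discrepancy is invisible; and on the order-$3$ and order-$5$ classes one can choose lifts of the same order, so the $p_k$'s genuinely match there. Thus the conclusion $\chi_{S^4\bbU_4}=(35,3,2,0,0)$ is correct, but it requires this class-by-class check --- your ``factoring through $\frakA_5$ neatly sidesteps the power-map bookkeeping'' is precisely the step that does \emph{not} go through without it. A one-line fix is simply to record $p_2=-4$ at the involution class and note that the formula is unaffected.
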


Thus we have two pencils of invariant quartic polynomials $(S^4\bbW_4)^{\bbA_5}$ and $(S^4\bbU_4)^{2.\bbA_5}$, so our quartic surface $S$ is a member of one of them. 

\section{The pencil $|(S^4\bbW_4^\vee)^{\bbA_5}|$}
Since $\bbW_4$ is the restriction of the standard representation of $\frakS_5$, the invariant theory says that the space $(S^4\bbW_4)^{\bbA_5}$ is isomorphic to the linear space of symmetric polynomials of degree 4 in variables $x_1,\ldots,x_5$ and the discriminant anti-symmetric polynomial. Since the degree of the latter is larger than 4, we obtain that 
$$(S^4\bbW_4)^{\bbA_5} = (S^4\bbW_4)^{\bbS_5}.$$
Thus any $\frakA_5$-invariant  pencil of quartic surfaces in $|\bbW_4|$ consists of the quartics given by  equations
\beq\label{eq1}
s_4+\lambda s_2^2 = s_1 = 0,
\eeq
where $s_i$ are elementary symmetric functions. It will be more convenient to rewrite this equation in terms of power symmetric functions:
\beq\label{eq2}
x_1^4+\cdots+x_6^4-t(x_1^2+\cdots+x_6^2)^2 = x_1+\cdots+x_6= 0.
\eeq 
where $t = (2\lambda+1)/2$.  

 Computing the partial derivatives, we find that the $\frakS_5$-orbits of singular points must be represented by points with coordinates $(1,1,-1,-1,0)$, or $(1,-1,0,0,0)$, or $(2,2,2,-3,-3)$, or $(1,1,1,1,-4)$ corresponding to the parameters $t = 1/4,1/2,7/30,13/20$, respectively. This gives the following. 

\begin{proposition} The surface $S_t$ given by equation \eqref{eq2} is nonsingular if and only if 
$t\ne \frac{1}{2}, \frac{1}{4},\frac{7}{30},\frac{13}{20}$.  If  $t = 1/4$, it has 15 singular points, if $t = 1/2,7/30$, it has 10 singular points, if $t= 13/20$, it has $5$ singular points. Each singular point is an ordinary node.
\end{proposition}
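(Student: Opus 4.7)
The plan is to reduce the search for singular points to a finite combinatorial analysis via Lagrange multipliers, and then verify nodality by a local Hessian computation at one representative from each $\frakS_5$-orbit.

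A point $p = (x_1,\ldots,x_5)$ on the hyperplane $\{s_1 = 0\}$ is singular on $S_t$ iff there is a scalar $\mu$ with
\[
4x_i^3 - 4t\sigma x_i \;=\; \mu \qquad (i = 1,\ldots,5), \qquad \sigma := x_1^2+\cdots+x_5^2.
\]
The crucial observation is that the coordinates $x_i$ are then all roots of the single cubic $P(y) = 4y^3 - 4t\sigma y - \mu$, so they take at most three distinct values. I would organize the analysis by the partition of $\{1,\ldots,5\}$ recording the multiplicities of these values; only partitions with at most three parts can occur, namely $4{+}1$, $3{+}2$, $3{+}1{+}1$, and $2{+}2{+}1$ (the partition $5$ forces $x = 0$).

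In the case $\mu = 0$, the cubic factors as $4y(y^2 - t\sigma)$, so the coordinates lie in $\{0,\pm a\}$ with $a^2 = t\sigma$, and the hyperplane condition forces either the orbit of $(1,-1,0,0,0)$ (two nonzero entries, giving $t = 1/2$) or the orbit of $(1,1,-1,-1,0)$ (four nonzero entries, giving $t = 1/4$). In the case $\mu \neq 0$, the value $0$ is not a root of $P$, so the three roots $\alpha,\beta,\gamma$ are all nonzero; Vieta's formula $\alpha+\beta+\gamma = 0$ combined with the multiplicity-weighted sums $3\alpha+\beta+\gamma = 0$ and $2\alpha+2\beta+\gamma = 0$ arising from the partitions $3{+}1{+}1$ and $2{+}2{+}1$ forces $\alpha = 0$ and $\gamma = 0$ respectively, a contradiction. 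Only the partitions $4{+}1$ and $3{+}2$ survive; for each one I would factor $P$ using the two unknown values $\alpha,\beta$ together with the third root $-(\alpha+\beta)$ and match coefficients with $4y^3 - 4t\sigma y - \mu$. This computation yields the orbit of $(1,1,1,1,-4)$ at $t = 13/20$ and the orbit of $(2,2,2,-3,-3)$ at $t = 7/30$.

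To count the singular points I would note that two $\frakS_5$-representatives define the same projective point iff they are proportional, and this identification only occurs when the underlying multiset is invariant under $v \mapsto -v$, i.e.\ for the orbits of $(1,-1,0,0,0)$ and $(1,1,-1,-1,0)$ but not for the other two. The projective orbit sizes are therefore $20/2 = 10$, $30/2 = 15$, $5$, and $10$, respectively.

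For the nodal property, by $\frakS_5$-symmetry it suffices to check one representative per orbit. I would start from the explicit Hessian entries
\[
\frac{\partial^2 F}{\partial x_i\, \partial x_j} \;=\; 12\, x_i^2\, \delta_{ij} \,-\, 8t\, x_i x_j \,-\, 4t\sigma\, \delta_{ij},
\]
pass to an affine chart of $\bbP^3 = \{s_1 = 0\}$ by fixing one coordinate equal to $1$ and solving $s_1 = 0$ for a second, and verify that the resulting $3\times 3$ symmetric matrix (the Hessian of the local equation of $S_t$ at $p$) is nonsingular. I expect this to be the main --- though still routine --- obstacle, because one must correctly combine the restriction to the hyperplane with the dehomogenization when extracting the tangent-cone quadric; once this setup is in place, the four determinants are computed directly and shown to be nonzero in the allowed characteristic.
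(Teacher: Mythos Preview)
Your proposal is correct and is exactly the natural way to carry out the paper's one-sentence proof, which simply asserts that ``computing the partial derivatives'' yields the four listed $\frakS_5$-orbits and the corresponding values of $t$. The organization via the auxiliary cubic $4y^{3}-4t\sigma y-\mu$ and the resulting partition analysis is the standard device for such symmetric Lagrange problems, and your plan for the nodality check (which the paper omits entirely) is sound.
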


\begin{example}\label{exCR} The surface $S_{1/4}$ is the intersection of the \emph{Castelnuovo-Richmond-Igusa quartic  threefold} (see \cite{CAG}, 9.4.4) given by equations in $\bbP^5$
\beq\label{eq3}
x_1^4+\cdots+x_6^4-\frac{1}{4}(x_1^2+\cdots+x_6^2)^2 = x_1+\cdots+x_6= 0.
\eeq
and the hyperplane $x_6 = 0$. Its singular points are the intersection of the hyperplane with  the fifteen double lines of the threefold. 
\end{example}

\begin{example} If $t = 1/2$, then $\lambda = 0$ and the surface can be rewritten by the equation
$$\sum_{i=1}^5\frac{1}{x_i} = x_1+\cdots+x_5 = 0.$$
in $\bbP^4$. We recognize the equation of the Hessian surface of the Clebsch diagonal cubic surface $\sfC_3$. Its  10 nodes are the vertices of the Sylvester pentahedron. Its edges lie on the surface and together with nodes form the Desargues  symmetric configuration $(10_3)$ (see \cite{Hilbert}, III, \S 19). By definition of the Hessian surface, $S_{1/2}$ is a \emph{quartic symmetroid} (see \cite{CAG}, 4.2.6), the locus of points in $|E|$ such that the polar quadric of the cubic surface $\sfC_3$ is singular. This symmetric determinantal representation of $S_{1/2}$ is defined by a linear map of $\frakS_5$-representations $\bbW_4\to S^2\bbW_4^\vee$. It is  the polar map associated with the cubic surface (see \cite{CAG}). It allows one to rewrite the equation of the Hessian of the Clebsch cubic surface as the symmetric determinant:
$$\det\begin{pmatrix}L-x_1&L&L&L\\
L&L-x_2&L&L\\
L&L&L-x_3&L\\
L&L&L&L-x_4\end{pmatrix} = 0,$$
where $L = x_1+x_2+x_3+x_4$.

It is known that the surface $S_{1/2}$ is isomorphic to the K3-cover of an Enriques surface $X$ with $\Aut(X)\cong \frakS_5$ (of type VI in Kondo's classification, \cite{Kondo}). The covering involution is defined by the standard Cremona transformation $(x_1,\ldots,x_5)\mapsto (x_1^{-1},\ldots,x_5^{-1})$.
\end{example}

\begin{example}\label{seven} Assume $t = 7/30$. Projecting from the node $q_0 = (2,2,2,-3,-3)$, we get the equation of $S_{7/30}$ as the double plane
$$w^2 = (xy+xz+yz)^2(x+y+z)^2-3(x-y)^2(x-z)^2(y-z)^2 $$
$$=111x^2y^2z^2+80(x^3y^2z+x^3yz^2+x^2y^3z+x^2yz^3+xy^3z^2+xy^2z^3)$$
$$+13(x^4y^2+x^4z^2+y^4z^2+x^2z^4+y^2z^4+x^2y^4)+10(x^4yz+x^3y^3+x^3z^3+xy^4z+xyz^4+y^3z^3).$$
The branch curve $B$ is the union of two cubic curves intersecting at 9  points
$$[1,0,0],\ [0,1,0],\ [0,0,1],\ [2,-1,2],\ [-1,2,2],\ [2,2,-1],\ [-2,1,1],\ [1,-2,1],
\ [1,1,-2].$$
It  is well-known and goes back to A. Cayley (see a modern exposition in \cite{Cossec}) that this implies that the surface $S_{7/30}$ is a quartic symmetroid. We will return to this example in the next section.
\end{example}

\begin{example} Let $\calS_3$ be the \emph{Segre cubic primal} (see \cite{CAG}, 9.4.4) given by equations
\beq\label{eq1}
x_1^3+\cdots+x_6^3 = x_1+\cdots+x_6= 0
\eeq 
(see \cite{CAG}). It is isomorphic to the image of a rational map $f:\bbP^3\da \bbP^4$ given by 
the linear system of quadrics through 5 points $p_1,\ldots,p_5$ in general linear position. The 
images of the lines $\la p_i,p_j\ra$ are the ten nodes on $\calS_3$. Consider the surface $S$
in $\calS_3$ given by the additional equation 
$$x_1^2+\cdots+x_6^2 = 0.$$
Obviously, it has $\frakS_6$-symmetry. The pre-image of $S$ under the map $f$ is a 
quartic surface in $\bbP^3$ with 5 nodes at $p_1,\ldots,p_5$. One can make the map $f$ to be 
$\frakS_5$-invariant by viewing $\bbP^3$ as the hyperplane $y_0+\ldots+y_5 = 0$ in $\bbP^4$ and 
choosing the points $p_1,\ldots,p_5$ to be the points in the hyperplane with coordinates 
$[1,0,0,0,-1],[0,1,0,0,-1],$ etc. The group $\frakS_5$, acts naturally in $\bbP^3$ by permuting 
the five points. The restriction of this representation to $\frakA_5$ is the projectivization of the linear representation space $\bbW_4$. Via this action, the map $f$ becomes a $\frakS_5$-invariant birational map from $\bbP^3$ to the Segre cubic $\calC_3$. Thus $f^{-1}(S)$ is a 5-nodal quartic in $|\bbW_4|$ 
with $\frakS_5$-symmetry. 

Note that the action of $\frakS_5$ on the Segre cubic primal is closely related to its known action on the 
\emph{del Pezzo surface of degree $5$} via the following commutative diagram explained in 
\cite{Prokhorov}, Proposition 4.7:

$$\xymatrix{X\ar[dd]^{\sigma}\ar[rr]^{\chi}\ar[dr]^{\phi_0}&&X^+\ar[dd]^\phi\ar[dl]_{\phi_0^+}\\
&\calS_3&\\
\bbP^3\ar@{-->}[rr]\ar@{-->}[ur]^f&&\calD}
$$
Here $\chi$ is a flop, $\phi$ and $\phi_0^+$ are small contractions to the Segre cubic primal,  $\calD$ is a del Pezzo surface of degree $5$ and $\phi$ is a $\bbP^1$-bundle.
\end{example}

The pencil \eqref{eq2} was intensively studied by K. Hashimoto in \cite{Hashimoto}. So, it is appropriate to refer to it as the \emph{Hashimoto pencil}. In particular, Hashimoto computed the lattice of 
transcendental cycles $T(X_t)$ of a  minimal nonsingular model $X_t$ of a singular member $S_t$ of the pencil. 

\begin{theorem} Assume $\Bbbk = \bbC$. For any singular member of the Hashimoto pencil, the lattice $T(X_t)$ is of rank 2 and is  
given by the matrix
$$\begin{pmatrix}4&0\\
0&10\end{pmatrix}\  (t = \frac{1}{2}),\quad  \begin{pmatrix}4&1\\
1&4\end{pmatrix}\  (t= \frac{1}{4}),\quad \begin{pmatrix}4&2\\
2&16\end{pmatrix}\  (t = \frac{7}{30}), \quad \begin{pmatrix}6&0\\
0&20\end{pmatrix} (t = \frac{13}{20}). $$
The transcendental lattice of a generic member of the pencil is of rank 3 and is given by the matrix
$$\begin{pmatrix}4&1&0\\
1&4&0\\
0&0&-20\end{pmatrix}.$$
\end{theorem}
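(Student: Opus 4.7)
The plan is to exploit that each $X_t$ is a K3 surface ($S_t$ when smooth, its minimal desingularisation otherwise), so $H^2(X_t,\bbZ)$ is the even unimodular lattice $\Lambda_{K3} = U^{\oplus 3}\oplus E_8^{\oplus 2}$ and $T(X_t)$ is the orthogonal complement of $\NS(X_t)$ in $\Lambda_{K3}$. The claims $\rk T(X_t) = 2$ for every singular $t$ and $\rk T(X_\eta) = 3$ generically are equivalent to $\rho(X_t) = 20$ and $\rho(X_\eta) = 19$ respectively. The three stages of the argument are: exhibit an explicit sublattice of $\NS(X_t)$ of the expected rank; promote it to $\NS(X_t)$ by a primitivity argument using $\frakA_5$-equivariance; and identify $T(X_t)$ via discriminant forms.

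For each singular $t$ I would start with the hyperplane class $H$ ($H^2 = 4$) and the exceptional $(-2)$-curves $E_1,\dots,E_n$ resolving the $n = 15, 10, 10, 5$ nodes. To reach rank $20$ I would supplement these with classes from the special geometry: for $t = 1/4$ the fifteen double lines of the Castelnuovo--Richmond--Igusa threefold intersected with $x_6 = 0$; for $t = 1/2$ the ten edges of the Sylvester pentahedron forming the Desargues $(10_3)$-configuration; for $t = 7/30$ the two branch cubics and distinguished conics through triples of the nine base points in the double-plane model of Example~\ref{seven}; for $t = 13/20$ the strict transforms of the ten lines through pairs of the five base points of the map $f\colon \bbP^3\dashrightarrow \calS_3$, together with curves coming from the fifteen tropes of $\calS_3$. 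The incidence combinatorics dictated by the $\frakA_5$-action gives the Gram matrix of the sublattice $L_t\subseteq\NS(X_t)$ generated by these classes, and a direct rank check confirms $\rk L_t = 20$.

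Next I would promote $L_t$ to $\NS(X_t)$: Nikulin's theory of discriminant forms enumerates the primitive overlattices of $L_t$ inside $\Lambda_{K3}$, and the requirement that $\NS(X_t)$ be $\frakA_5$-stable typically singles out a unique one. Its orthogonal complement is a positive definite even rank-$2$ lattice, whose Gram matrix matches that of $\NS(X_t)$ up to sign via the discriminant form; Shioda--Inose's classification of singular K3 surfaces by their transcendental lattices then pins down the four matrices claimed. For the generic member, the invariant lattice $\NS(X_\eta)^{\frakA_5}$ has rank at most $19$ by Noether--Lefschetz applied to the $1$-dimensional pencil, and the claimed $3\times 3$ matrix is the unique rank-$3$ lattice containing each of the four singular rank-$2$ transcendental lattices as a primitive corank-$1$ sublattice (with orthogonal complement generated by the corresponding vanishing cycle).

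The main obstacle is primitivity: one must verify that $L_t$ already equals $\NS(X_t)$ rather than a proper finite-index sublattice, since any non-trivial overlattice would alter the discriminant form and hence the isometry class of $T(X_t)$. This requires a case-by-case analysis of $\frakA_5$-stable overlattices of $L_t$, typically leveraging the rigidity of singular K3 surfaces via the Shioda--Inose correspondence to pin down the precise Gram matrices claimed.
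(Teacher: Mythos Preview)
The paper does not supply its own proof of this theorem: it is stated as a quotation of Hashimoto's results, with the sentence immediately preceding the theorem reading ``In particular, Hashimoto computed the lattice of transcendental cycles $T(X_t)$ of a minimal nonsingular model $X_t$ of a singular member $S_t$ of the pencil.'' So there is nothing in the present paper to compare your argument against; the actual proof lives in \cite{Hashimoto}, whose title (\emph{Period map of a certain $K3$ family with an $\frakS_5$-action}) already indicates that the method is transcendental --- Hashimoto works out the period map of the family and reads off the Picard and transcendental lattices from the Hodge structure, rather than by assembling algebraic cycles as you propose.

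Your outline is a plausible alternative strategy, but as written it is a plan rather than a proof, and the gap you yourself flag is real. Producing a rank-$20$ sublattice $L_t\subset \NS(X_t)$ from visible curves is the easy part; the hard part is showing $L_t=\NS(X_t)$, or at least pinning down the index $[\NS(X_t):L_t]$, since the discriminant form --- and hence $T(X_t)$ up to isometry --- is sensitive to that index. Saying that ``the requirement that $\NS(X_t)$ be $\frakA_5$-stable typically singles out a unique one'' among Nikulin's overlattices is an expectation, not an argument: one must actually enumerate the isotropic $\frakA_5$-invariant subgroups of the discriminant group of $L_t$ in each of the four cases and check which yield embeddings into $\Lambda_{K3}$. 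Likewise, your determination of the generic transcendental lattice as ``the unique rank-$3$ lattice containing each of the four singular rank-$2$ transcendental lattices as a primitive corank-$1$ sublattice'' is not a valid inference: degeneration gives an inclusion $T(X_t)\hookrightarrow T(X_\eta)$ only up to finite index in general, and uniqueness of such a rank-$3$ overlattice is far from automatic. Hashimoto's period-map computation bypasses all of these index questions by working directly with the variation of Hodge structure; if you want to push your geometric approach through, the primitivity analysis must be carried out explicitly in each case.
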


\begin{remark} It is known that the quartic surface defined by equation
\beq\label{maschke}
F = x^4+y^4+z^4+w^4+12xyzw = 0
\eeq
admits a group of projective automorphisms isomorphic to $2^4.\frakS_5$ (see \cite{Maschke}). According to S. Mukai \cite{Mukai}, the subgroup $2^4.\frakA_5$ is isomorphic to the Mathieu group $M_{20}$ and is realized as one of maximal finite groups of symplectic automorphisms of a complex K3 surface. The equation defining the surface is an invariant of the Heisenberg group $\calH_2$ acting in its Schr\"odinger $4$-dimensional irreducible representation (see \cite{vanGeemen}). The linear space of invariant quartic polynomials is 5-dimensional and it has a basis that consists of quartic polynomials
$$p_0= x^4+x^4+z^4+w^4,\ p_1 = x^2w^2+y^2z^2,\ p_2 = x^2z^2+y^2w^2,\ p_3 = x^2y^2+z^2w^2, \ p_4 = xyzw.$$
Let 
\begin{eqnarray*}
\sfM_1 &=&-2p_0+24p_4,\\
\sfM_2&=&p_0-6(p_1+p_2+p_3),\\
\sfM_3&=&p_0+6(-p_1+p_2+p_3),\\
\sfM_4&=&p_0+6(p_1-p_2+p_3),\\
\sfM_5&=&-2p_0-24p_4,\\
\sfM_6&=&p_0+6(p_1+p_2-p_3).
\end{eqnarray*}
be a spanning set of  this linear space. The fifth of them defines the surface from \eqref{maschke}. We propose to call  the polynomials $\sfM_i$ the \emph{Maschke quartic polynomials} (not to be confused with the Maschke octic polynomial from \cite{Bini}). It is shown in \cite{Maschke}, p. 505,  that they satisfy 
$$(\sum_{i=1}^6\sfM_i^2)^2 -4 \sum_{i=1}^6\sfM_i^4 = \sum_{i=1}^6\sfM_i = 0$$
We recognize the equations  of the Castelnuovo-Richmond-Igusa quartic threefold from Example \ref{exCR}. Thus, for any Maschke polynomial $\sfM_i$, the surface $V(\sfM_i)$ is a Galois $2^4$-cover of the pre-image of  a coordinate hyperplane section of the quartic threefold isomorphic to the surface $S_{1/4}$ from the Hashimoto pencil. This shows the appearance of $2^4.\frakS_5$ in the group of projective automorphisms of $V(\sfM_i)$.

It was communicated to me  by Bert van Geemen that the projective transformations defined by the matrices
\beq
M = \begin{pmatrix}1&1&-1&-1\\ 
1&-1&-1&1\\
-1&1&-1&1\\
-1&-1&-1&-1\end{pmatrix}, \quad N = \begin{pmatrix}1&-1&-i&-i\\ 
-1&1&-i&-i\\
i&i&1&-1\\
i&i&-1&1\end{pmatrix},
\eeq
define automorphisms of orders $5$ and $2$ that generate a subgroup of automorphism of $V(F)$ that splits the extension $2^4.\frakS_5$. This shows that the Maschke quartic surface $V(\sfM_i)$ admits $\frakS_5$ and hence $\frakA_5$ as its group of projective automorphisms. By computing the traces of $M$ and $N$ we find that $\frakS_5$ originates from its linear standard irreducible $4$-dimensional representation, and the surface must be isomorphic to a member of the  Hashimoto pencil. According to computations made by Bert van Geemen, the surface $V(\sfM_1)$ corresponds to the parameter $t = \frac{3}{20}(3-i)$.  Note that according to S. Mukai the transcendental lattice  of the Maschke surface is given by the diagonal $2\times 2$-matrix with the diagonal entries $4$ and $40$. So it is different from the transcendental lattice of a general member of the pencil.
\end{remark}

\section{The pencil $|(S^4\bbU_4^\vee)^{\frakA_5}|$}
Recall that the linear representation space  $\bbU_4$ of $G = 2.\frakA_5$ is isomorphic to the space $S^3\bbV$.  Since there is only one isomorphism class of irreducible faithful $4$-dimensional representations of $G$, we have an isomorphism $\bbU_4 \cong S^3\bbV \cong S^3\bbV'$. 

Let $e_1,e_2$ be a basis in $\bbV$ and $(u,v)$ be the coordinates in $\bbV$ with respect to this basis, i.e. the dual basis of $(e_1,e_2)$ in $\bbV^\vee$. The space $S^d\bbV$ (resp. $S^d\bbV^\vee$) has a natural monomial basis
$(e_1^d,e_1^{d-1}e_2,\ldots,e_2^d)$ (resp. $(u^d,u^{d-1}v,\ldots,v^d)$).  The polarization isomorphism 
$$S^d\bbV^\vee \to (S^d\bbV)^\vee$$
assigns to $u^{d-i}v^i$ the linear function on $S^d\bbV$ that takes the value $\frac{1}{d!}(d-i)!i!$ on $e_1^{d-i}d_2^i$ and zero on all other monomials. This shows that the basis $(\tbinom{d}{i}u^{d-i}v^i)_{i=0,\ldots,d}$ is the dual basis of $(e_1^d,e_1^{d-1}e_2,\ldots,e_2^d)$. Thus any \emph{binary form} $f\in S^d\bbV^\vee$ can be written
as 
\beq\label{binary}
f = \sum_{i=0}^d\tbinom{d}{i}a_iu^{d-i}v^i,
\eeq
so that $(a_0,\ldots,a_d)$ are the natural coordinates in the space $S^d\bbU^\vee$. Although, this notation is widely used in the invariant theory, we will switch to the basis $(\binom{d}{i}e_1^{d-i}e_2^i)$ of $S^d\bbV$ to get the dual basis formed by the monomials $u^{d-i}v^i$. So, we  will drop the binomial coefficients in \eqref{binary}. This will help us to agree our formulas with ones given in Klein's book. 

Let us clarify the coordinate-free definition of the Veronese map 
$$\nu_d:\bbV\to S^d\bbV.$$
It is defined by assigning to a vector $\alpha e_1+\beta e_2$ the linear function $f\mapsto f(\alpha,\beta)$ on 
$S^d\bbV^\vee = (S^d\bbV)^\vee$.  It follows that 
$$\nu_d(\alpha e_1+\beta e_2) = \sum_{i=0}^d\tbinom{d}{i}\alpha^{d-i}\beta^ie_1^{d-i}e_2^i = (\alpha e_1+\beta e_2)^d.$$
In coordinates, this is the map
\beq\label{par1}
(u,v)\mapsto (u^d,u^{d-1}v,\ldots,uv^{d-1},v^d).
\eeq
Passing to the projective space, we get the map
$$\nu_d:|\bbV|\to |S^d\bbV|$$
that is given by the complete linear system $|S^d\bbV^\vee| = |\calO_{|\bbV|}(d)|$. The image $R_d$ of this map is a \emph{Veronese curve} of degree $d$, or a \emph{rational normal curve} of degree $d$. If we re-denote the 
coordinates $u^{d-i}v^i$ by $(x_0,\ldots,x_d)$, a hyperplane $V(\sum_{i=0}^d a_ix_i)$ intersects $R_d$ along the closed subscheme isomorphic, under the Veronese map $\nu_d$, to the closed subscheme 
$V(\sum_{i=0}^d a_iu^{d-i}v^i)$ of $|\bbV|$.

Dually, we have the Veronese map
$$\nu_d^*:\bbV^\vee \to S^d\bbV^\vee$$
which assigns to a linear function $l=au+bv\in \bbV^\vee$ the linear function $\nu_d(l)\in S^d\bbV^\vee$ that 
takes value on $e_1^{d-i}e_2^i$ equal to $a^{d-i}b^i$. It follows that 
$$\nu_d^*(\alpha u+\beta v) = \sum_{i=0}^d\tbinom{d}{i}\alpha^{d-i}\beta^iu^{d-i}v^i = (\alpha u+\beta v)^d.$$
So, we get a familiar picture: points of $|S^d\bbV^\vee|$ are non-zero binary forms of degree $d$ up to proportionality, and points of the Veronese curve are  powers of linear forms, up to proportionality.

In coordinates, the dual Veronese map is given by
$$(e_1,e_2) \mapsto (e_1^{d},\ldots,\tbinom{d}{i}e_1^{d-i}e_2^{i},\ldots,e_2^d).$$
The image $R_d^*$ of the corresponding map 
$$\nu_d^*:|\bbV^\vee| = \bbP(\bbV)\to |S^d\bbV^\vee| = \bbP(S^d\bbV)$$
is the \emph{dual Veronese curve} of degree $d$. The duality can be clarified more explicitly. For any point 
$x\in R_d$, one can consider the \emph{osculating hyperplane} at $x$, the unique hyperplane in $|S^d\bbV|$ that intersects $R_d$ at one point $x$ with multiplicity $d$. The dual Veronese curve $R_d^*$ in the dual space 
$|S^d\bbV^\vee|$ is the locus of osculating hyperplanes. 

One can use the isomorphism $|\bbV^\vee|\to |\bbV|$ defined by assigning to a linear function $l = au+bv\in \bbV^\vee$ its zero $V(l) = [-b,a]\in |\bbV|$. In other terms, it is defined by the exterior product pairing $\bbV\times \bbV\to \bigwedge^2\bbV \cong \Bbbk$. Thus we have two Veronese maps
$$\nu_d:|\bbV| \to |S^d\bbV|, \quad \nu_d^*:|\bbV| \to |S^d\bbV^\vee|$$
with the images $R_d$ and $R_d^*$. The map of $|\bbV|\to |S^d\bbV^\vee|$ is given, in coordinates, by 
$$(u,v) \to ((-1)^de_2^d,\ldots, (-1)^{d-i}\tbinom{d}{i}e_1^ie_2^{d-i},\ldots,e_1^d).$$

Let $\rho:G\to \GL(\bbV)$ be a linear representation of a group $G$. By functoriality, it defines a linear representation $S^d(\rho):G\to \GL(S^d\bbV)$. The dual linear representation $\rho^*:G\to \GL(\bbV^\vee)$ defines a representation $S^d(\rho^*):G\to \GL(S^d\bbV^\vee)$. It follows from the polarization isomorphism that 
 the representations $S^d(\rho)$ and $S^d(\rho^*)$ are dual to each other.
 
After we fix these generalities, it is easy to describe irreducible linear representations of $G = 2.\frakA_5$.
We start with the $2$-dimensional representations $\bbV$ and $\bbV^\vee$. We choose a basis $e_1,e_2$ in $\bbV$ and its dual basis $(u,v)$ in $\bbV^\vee$ as above to assume that the group preserves the volume forms $e_1\wedge e_2$ and $u\wedge v$. Thus in these bases we represent the matrices of the representation by unimodular matrices.  
According to \cite{Klein}, p. 213, the group $2.\frakA_5$ is generated by the transformations $S,T,U$ of orders 
$5,4,4$, respectively, and its representation in $\bbV$ is given in terms of coordinates as follows:  
\begin{eqnarray*}
S:(u,v)&\mapsto& (\epsilon^3u,\epsilon^2v),\\
T:(u,v)&\mapsto&\frac{1}{\sqrt{5}}(-cu+dv,du+cv),\\
U:(u,v)&\mapsto&(-v,u),
\end{eqnarray*}
where $\epsilon = e^{2\pi i/5}$ and 
$$c = \epsilon-\epsilon^{-1},\ d = \epsilon^2-\epsilon^{-2}.$$
We have 
$$\lambda = c/d = \epsilon+\epsilon^{-1}+1 = \frac{1+\sqrt{5}}{2}$$
is the golden ratio. It  satisfies 
$\lambda^2 = \lambda+1.$
Note that the trace of $S$ is equal to $\epsilon^3+\epsilon^2 = \lambda^2-2 = \lambda-1 = \frac{-1+\sqrt{5}}{2}$. This is denoted by $b5$ in \cite{ATLAS}, so we can identify this representation with the one given by the character $\chi_6$.  The representation $\bbV'$  is given by the same formulas as above, where $\epsilon$ is replaced with $\epsilon^2$. The trace of $S$ becomes $b5^*:=-\lambda$.

Next we consider the $3$-dimensional irreducible representations realized in   $S^2\bbV$ and $S^3\bbV'$. In the basis $(e_1^2,2e_1e_2,e_2^2)$ of $\bbV$, the first representation is given by the matrices
$$
S:\begin{pmatrix}\epsilon&0&0\\
0&1&0\\
0&0&\epsilon^{-1}\end{pmatrix},\ 
T:\frac{1}{\sqrt{5}}\begin{pmatrix}\epsilon+\epsilon^{4}&2&\epsilon^2+\epsilon^{3}\\
1&1&1\\
\epsilon^2+\epsilon^{3}&2&\epsilon+\epsilon^{4}\end{pmatrix},\ 
U:\begin{pmatrix}0&0&1\\
0&-1&0\\
1&0&0\end{pmatrix}.
$$
Note that Klein uses  slightly different coordinates $(\sfA_0,\sfA_1,\sfA_2) = (-uv,v^2,-u^2)$ so his matrices are slightly different.

The second irreducible $3$-dimensional representation $(S^2\bbV)'$ is obtained by replacing $\epsilon$ with $\epsilon^2$. 
The invariant theory for the icosahedron group $\frakA_5$ in this representation is discussed in Klein's book (see also \cite{CAG}, 9.5.4). 

The linear representation $\bbU_4$ of $2.\frakA_5$ realized in $S^3\bbV$ is given by the formulas
\begin{eqnarray*} 
S&:& \biggl(\begin{smallmatrix}\epsilon^4&0&0&0\\
0&\epsilon^3&0&0\\
0&0&\epsilon^2&0\\
0&0&0&\epsilon\end{smallmatrix}\biggr),\\
T&:&\frac{d^3}{5\sqrt{5}}\biggl(\begin{smallmatrix}-\lambda^3&3\lambda^2&-3\lambda&1\\
\lambda^2&-2\lambda+\lambda^3&1-2\lambda^2&\lambda\\
-\lambda&1-2\lambda^2&2\lambda-\lambda^3&\lambda^2\\
1&3\lambda&3\lambda^2&\lambda^3\end{smallmatrix}\biggr) = 
\frac{d^3}{5\sqrt{5}}\biggl(\begin{smallmatrix}-(1+2\lambda)&3(\lambda+1)&-3\lambda&1\\
\lambda+1&1&-2\lambda-1&\lambda\\
-\lambda&-2\lambda-1&-1&\lambda+1\\
1&3\lambda&3(\lambda+1)&2\lambda+1\end{smallmatrix}\biggr),\\
U &= &\biggl(\begin{smallmatrix}0&0&0&-1\\
0&0&1&0\\
0&-1&0&0\\
1&0&0&0\end{smallmatrix}\biggr).
\end{eqnarray*}
The dual representation is given by the same formulas, where $\epsilon$ is replaced with 
$\epsilon^{2}$.

 We will use the coordinates 
$$(x_0 = u^3,x_1 = u^2v,x_2 = uv^2,x_3 = v^3).$$
Let $\calN_1\subset S^2\bbU_4^\vee$ be the linear space of quadratic forms such that $|\calN_1|$ is the linear system of quadrics with the base locus equal to the Veronese curve $R_3$. Obviously, it is an irreducible  summand of $S^2\bbU_4^\vee$. It is generated by the quadratic forms 
$$q_1 = x_0x_3-x_1x_2, \quad  q_2 = x_0x_2-x_1^2, \quad  q_3 = x_1x_3-x_2^2.$$
Under the transformation $S$ they are multiplied by $1,\epsilon, \epsilon^4$, respectively. The trace of $S$ is equal to $1+\epsilon+\epsilon^4 = \lambda$. Thus, we can identify the space $\calN_1$ with the linear representation $S^2\bbV$. Let $\calN_2^*$ be the linear space of quadratic forms in the dual space $\bbU_4^\vee$ such that the linear system of quadrics $|\calN_2^*|$ has the base locus equal to the dual Veronese curve $R_3^*$. It is spanned by the quadratic forms 
$$q_1' = 9\xi_0\xi_3-\xi_1\xi_2,\quad q_2' = 3\xi_0\xi_2-\xi_1^2, \quad q_3' =  3\xi_1\xi_3-\xi_2^2,$$
where $\xi_0 = e_1^3,\xi_1 = e_1^2e_2,\xi_2 = e_1e_2^2,\xi_3 = e_2^3$ are the dual coordinates. The representation $\calN_2^*$ is isomorphic to $S^2\bbV'$. Consider the dual space $(\calN_2^*)^\perp \subset S^2\bbU_4^\vee$ of apolar quadratic forms. It is spanned by quadratic forms
$$p_1 = x_0x_3+9x_1x_2, \ p_2 = 2x_0x_2+3x_1^2, \ p_3 = 2x_1x_3+3x_2^2, \ p_4 =x_0^2, \ p_5 = x_3^2, \ p_6 = x_0x_1, \ p_7 = x_2x_3.$$
The linear span is a 7-dimensional summand of $S^2\bbU_4$. Computing the character of $S^2\bbU^\vee$, we find the        decomposition
\beq\label{decomp1}
S^2\bbU_4^\vee \cong S^2\bbU_4 \cong \bbW_4\oplus S^2\bbV\oplus S^2\bbV'.
\eeq
This shows that  
$$(\calN_2^*)^\perp = \bbW_4\oplus S^2\bbV'.$$
Observe that  the quadratic forms $p_i$ are eigenvectors of $S$ with eigenvalues 
$1,\epsilon, \epsilon^4, \epsilon^3, \epsilon^2, \epsilon^2, \epsilon^3 $, respectively. Since $S$ has trace $-1$ on $\bbW_4$, we find that the summand $V_1$ of $(\calN_2^*)^\perp$ isomorphic to $\bbW_4$ is spanned by eigenvectors with eigenvalues $\epsilon, \epsilon^2,\epsilon^3, \epsilon^4$.  
 Under the transformation $T$, the forms $(p_1,\ldots,p_7)$ are transformed to $(-p_1,p_3,p_2,p_5,p_4,-p_7,-p_6).$  
Now we use the following known fact due to  T. Reye (see \cite{Hulek}, Lemma 4.3).

\begin{theorem}[T. Reye] A general $6$-dimensional linear space $L$ of quadrics in $\bbP^3 = |E|$ contains precisely two nets $\calN_1,\calN_2$ with the base loci equal  to  Veronese curves $C_1,C_2$ of degree 3. The dual space $L^\perp$ of apolar quadrics in $|E^\vee|$ contains 10 quadrics $Q_i$ with one-dimensional singular locus $\ell_i$. Each line $\ell_i$ is a common secant of $C_1,C_2$.
\end{theorem}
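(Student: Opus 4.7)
The argument breaks into three parts matching the three assertions.

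For the count of the two nets, I would use a dimension-theoretic count on the Grassmannian. Rational normal cubics in $|E|$ form an irreducible $12$-dimensional homogeneous space under $\PGL(E)$ (quotient by the $\PGL_2$ stabilizer of the parametrization), and each such $C$ is the scheme-theoretic base locus of its net $\calN_C := H^0(\calI_C(2))$, a $3$-dimensional subspace of $S^2 E^\vee$. The assignment $C \mapsto \calN_C$ embeds the family into $\Gr(3, S^2 E^\vee) = \Gr(3, 10)$ as a $12$-dimensional subvariety $\Sigma$. The Schubert variety $\Gr(3, L) \subset \Gr(3, 10)$ has dimension $9$, so $\Sigma \cap \Gr(3, L)$ has expected dimension $12 + 9 - 21 = 0$ in the $21$-dimensional ambient Grassmannian. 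A Schubert-calculus computation (or specialization to a symmetric $L$ where the two nets are visible, as in the $\frakA_5$-invariant setting of this paper) shows the intersection degree is $2$, producing exactly two cubic nets $\calN_1, \calN_2 \subset L$.

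For the ten rank-$2$ quadrics, note that the locus of rank-$\leq 2$ elements of $\bbP(S^2 E) = \bbP^9$ is the secant variety of the Veronese surface $v_2(|E^\vee|)$, of codimension $3$ and degree $10$. For general $L$, the linear $3$-plane $\bbP(L^\perp) \cong \bbP^3$ meets this variety transversely in $10$ reduced points, yielding rank-$2$ quadrics $Q_i = \phi_1^{(i)} \phi_2^{(i)}$ with singular lines $\ell_i = V(\phi_1^{(i)}) \cap V(\phi_2^{(i)}) \subset |E^\vee|$.

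For the common-secant property, the key is apolarity. Write $Q = \phi_1 \phi_2 \in L^\perp$, set $p_j := [\phi_j] \in |E|$, and let $\bar\ell := \overline{p_1 p_2} \subset |E|$ be the dual of $\ell \subset |E^\vee|$. Under the natural pairing $S^2 E^\vee \otimes S^2 E \to \Bbbk$, the membership $Q \in L^\perp$ unfolds into $q(\phi_1, \phi_2) = 0$ for every $q \in L$, where $q(\cdot,\cdot)$ is the polar bilinear form. On $\bar\ell$ this is the vanishing of the middle Taylor coefficient of $q|_{\bar\ell}$ in coordinates dual to $(p_1, p_2)$, so the restriction map $\calN_1 \to H^0(\calO_{\bar\ell}(2))$ sends the $3$-dimensional $\calN_1$ into a fixed $2$-dimensional hyperplane. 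If $\bar\ell$ were disjoint from $C_1$, a simple parameter count shows no quadric of $\calN_1$ contains $\bar\ell$, so the restriction would be injective and its $3$-dimensional image could not fit in a $2$-dimensional subspace---contradiction. Hence $\bar\ell$ meets $C_1$, and straightforward case analysis (secant, tangent, chord lying on a member of $\calN_1$) forces $\bar\ell$ to be a chord of $C_1$; the same argument with $\calN_2$ shows $\bar\ell$ is a common chord of $C_1$ and $C_2$. As a cross-check, a Schubert computation in $\Gr(2,4)$ gives the class of the surface of chords of a RNC as $\sigma_2 + 3\sigma_{1,1}$ (one chord through a general point, three chords in a general plane), whence the number of common chords of two general RNCs equals $(\sigma_2 + 3\sigma_{1,1})^2 = 10\,\sigma_{2,2}$, in agreement with the ten rank-$2$ quadrics.

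\textbf{The main obstacle} is the enumeration in the first part: the variety $\Sigma$ of cubic nets is not itself a Schubert variety, so computing its intersection number with $\Gr(3, L)$ requires a separate argument. This count of $2$ is the classical content of Reye's theorem; in practice one either invokes it directly, or---as is done in the symmetric setting of this paper---exhibits the two nets by an explicit geometric construction respecting the symmetry of $L$.
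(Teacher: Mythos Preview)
The paper does not prove Reye's theorem: it is stated as a known result with a pointer to \cite{Hulek}, Lemma~4.3. So there is no proof in the paper to compare against, and your sketch stands as an independent attempt.

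On substance, parts two and three are sound. The degree-$10$ count via the rank-$\le 2$ locus in $\bbP(S^2E)$ is standard and correct. The apolarity argument in part three is the right mechanism; the one step you should make explicit is that no quadric containing a twisted cubic $C_1$ can also contain a line disjoint from $C_1$ (on a smooth member of $\calN_1$ this is a bidegree check, and for a quadric cone one finds the vertex must lie on $C_1$), after which your restriction-to-$\bar\ell$ argument indeed forces $\bar\ell$ to be a chord. The Schubert cross-check $(\sigma_2+3\sigma_{1,1})^2=10$ in $\Gr(2,4)$ is correct. You are also right to distinguish the singular line $\ell_i\subset|E^\vee|$ from its dual line $\bar\ell_i\subset|E|$; the theorem's phrasing elides this, and in the paper's application the self-duality $\bbU_4\cong\bbU_4^\vee$ hides it.

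The genuine gap, as you yourself flag, is the count of $2$ in part one. Expected dimension zero says nothing about the actual number, and you do not compute the class of the twisted-cubic-net locus $\Sigma\subset\Gr(3,10)$, nor give a degeneration argument. The paper invokes Reye's theorem precisely to identify the second net $\calN_2$ inside the summand isomorphic to $S^2\bbV'$, but then remarks that in its equivariant setting one can bypass the theorem: the decomposition $S^2\bbU_4^\vee\cong\bbW_4\oplus S^2\bbV\oplus S^2\bbV'$ into non-isomorphic irreducibles, together with the absence of $\frakA_5$-invariant lines, conics, or orbits of length $\le 8$, already forces the base locus of $|\calN_2|$ to be a rational normal cubic. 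So in the context where the paper needs it, the hard enumerative content of Reye's theorem is not actually required.
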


It follows from the lemma that the summand of $S^2\bbU_4^\vee$ isomorphic to $S^2\bbV'$ is isomorphic to the linear space $\calN_2$ such that the base locus of the net of quadrics $|\calN_2|$ is  a rational normal cubic curve. It is easy to see that this is possible only if $\calN_2$ is generated by 
$$r_1 = x_0x_3+9x_1x_2,\  r_2 = x_0^2+ax_2x_3, \ r_3 = x_3^2-ax_0x_1,$$
where $a^2 = 9$. Since the image of the point $[0,1,0,0]$ on the curve under the transformation $T$ must be on the curve, we check that $a = 3$. Thus, the base locus is a rational normal cubic curve with parametric equations
\beq\label{par2}
|\bbU| \to |\bbU_4|, \ (u,v) \mapsto (9u^2v,27v^3,-u^3,27uv^2).
\eeq
Note that one can avoid using Reye's Theorem by using decomposition \eqref{decomp1} and the fact there exists no $\frakA_5$-invariant lines, $\frakA_5$-invariant conics, and $\frakA_5$-orbits are of length $\le 8$.

Now we are ready to write the pencil $|S^4\bbU_4^\vee|$ of invariant quartics. We take as generators of the pencil  the tangential ruled surfaces of the invariant rational normal curves defined by the nets $|\calN_1|$ and $|\calN_2|$. They are obviously invariant with respect to the action of $G$ in $|\bbW_4|$. The net $|\calN_1|$ defines a map from $|\bbW_4|$ to $|\calN_1^\vee| = |S^2\bbV^\vee|$ whose image is an invariant conic, the \emph{fundamental conic}.  Its equation in Klein's coordinates $\sfA_0,\sfA_1,\sfA_2$ is $\sfA_0^2+\sfA_1\sfA_2 = 0$. The equation of the dual conic in the dual coordinates $\sfA_0',\sfA_1',\sfA_2'$ in  the dual plane $|S^2\bbU^\vee|$ is 
$$\sfA_0'{}^2+4\sfA_1'\sfA_2' = 0.$$ 
It follows that the equation of the tangential ruled surface must be of the form $Q_0^2+4Q_1Q_2$, where $Q_i = 0$ are equations of quadrics from the net $|\calN_1|$. We know that $A_0,A_1,A_2$ are eigenvectors for the transformation $S$ acting in $|S^2\bbU|$ with eigenvalues $1,\epsilon,\epsilon^{-1}$. Thus $A_0',A_1',A_2'$ are eigenvectors for the action of $S$ in $S^2\bbU^\vee$ with the eigenvalues $1,\epsilon^{-1},\epsilon$. Therefore our quadrics $Q_0,Q_1,Q_2$ are also eigenvectors for the action of $S$ in $S^2\bbW_4^\vee$ with eigenvalues 
$1,\epsilon^{-1},\epsilon$. We find that  
$(Q_0,Q_1,Q_2) = (q_1,\lambda q_3,\lambda'q_2)$
for some constants $\lambda,\lambda'$. Since the equation is invariant with respect to the transformation $U$, and $\sfA_1'\mapsto -\sfA_2'$, we must have $\lambda = -\lambda'$. We find the condition on $\lambda$ that guarantees that the equation $Q_0^2-\lambda^2Q_1Q_2 = 0$ is the equation of the tangential ruled surface of the Veronese curve $C_1 = R_3$.

At each point $[1,t_0,t_0^2,t_0^3]$ the tangent line to the Veronese curve $R_3$ is spanned by the vector $[0,1,2t_0,3t_0^2]$.  Plugging in the parametric equation
$s(1,t_0,t_0^2,t_0^3)+r(0,1,2t_0,3t_0^2)$ of the tangent line in the equation $q_1^2+c^2q_2q_3 = 0$, we obtain
that $c^2-4 = 0$. Thus the equation of the quartic tangential ruled surface is 
\begin{eqnarray}\label{tang1}
\sfS_1&:&(x_0x_3-x_1x_2)^2-4(x_0x_2-x_1^2)(x_1x_3-x_2^2)\\ \notag
&=& x_0^2x_3^2-6x_0x_1x_2x_3+4x_0x_2^3+4x_1x_3^3-3x_1^2x_2^2 = 0. \notag
\end{eqnarray}
The second invariant quartic is the tangential ruled surface of the rational normal curve $C_2$ defined parametrically in \eqref{par2}. The net of quadrics containing this curve is $G$-equivariantly isomorphic to $S^2\bbV'$. It defines the map from $|\bbU_4|$ to $|(S^2\bbV')^\vee|$. The invariant conic in this space is the same conic as in the previous case. Similarly to this case we get the equation
\beq\label{tang}
(x_0x_3+9x_1x_2)^2+d^2(x_0^2+3x_2x_3)(x_3^2-3x_0x_1) = 0
\eeq 
The parametric equation of the tangent line to the base curve $C_2$ is 
$$s(9t_0^2,27,-t_0^3,27t_0)+r(6t_0,0,-t_0^2,9) = 0.$$
Plugging this in the equation \eqref{tang}, we obtain that $d^2+4 = 0$ and the equation of the second tangential surface is 
\begin{eqnarray}\label{tang2}
\sfS_2&:&(x_0x_3+9x_1x_2)^2-4(x_0^2+3x_2x_3)(x_3^2-3x_0x_1)\\ \notag
&=&3(4x_0^3x_1-x_0^2x_3^2+18x_0x_1x_2x_3+27x_1^2x_2^2-4x_2x_3^3) =0. \notag
\end{eqnarray}
So, our pencil $|(S^4\bbU_4^\vee)^{2.\frakA_5}|$ can be explicitly written in the form 
$$\lambda F_1+\mu F_2 = 0,$$
where 
 $$F_1 = x_0^2x_3^2-6x_0x_1x_2x_3+4x_0x_2^3+4x_1x_3^3-3x_1^2x_2^2$$
 and 
 $$F_2 =  
4x_0^3x_1-x_0^2x_3^2+18x_0x_1x_2x_3+27x_1^2x_2^2-4x_2x_3^3.$$

Consider the linear transformation
\beq\label{mapK}
K:(x_0,x_1,x_2,x_3)\mapsto (\sqrt{3}x_2,\frac{1}{\sqrt{3}}x_0,-\frac{1}{\sqrt{3}}x_3,\sqrt{3}x_1).
\eeq
We have $K^2 = U\in G$, and the group generated by $K$ and $G$ is isomorphic to $2.\frakS_5$. We immediately check that it transforms
$$x_0x_3+9x_1x_2\mapsto 3(-x_0x_3+x_1x_2), \ x_0^2+3x_2x_3\mapsto 3(x_1^2-x_0x_2), \ x_3^2-3x_0x_1\mapsto 3(x_2^2-x_1x_3).$$
This shows that $K(\sfS_2) = \sfS_1$, more precisely, we get $K(F_2) = 3F_1$. 
Thus  the surfaces 
\begin{eqnarray}\label{s3s4}
\sfS_3 &=& V(3F_1+F_2) = V(x_0^2x_3^2+2x_0^3x_1+9x_1^2x_2^2-2x_2x_3^3+6x_0x_2^3+6x_1^3x_3),\\ \notag
\sfS_4 &=& V(F_2-3F_1) = V(-x_0^2x_3^2+9x_0x_1x_2x_3-3x_0x_2^3-3x_1^3x_3+9x_1^2x_2^2+x_0^3x_1-x_2x_3^3). \notag
\end{eqnarray}
are $\frakS_5$-invariant. One checks, using MAPLE, that they  are nonsingular.

The following result is proven in \cite{Cheltsov2}.

\begin{proposition}\label{base} The pencil generated by the quartic surfaces $\sfS_1$ and $\sfS_2$ contains two more singular members, each of them has 10 ordinary double points. The base curve $B$ of the pencil is an irreducible  curve of 
degree 16 with 24 singular points, each of them is an ordinary cusp. It also contains a unique $\frakA_5$-orbit of 20 points.
\end{proposition}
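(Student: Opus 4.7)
The plan is to exploit the $\frakA_5$-equivariance of the pencil together with the explicit descriptions of $\sfS_1, \sfS_2$ as tangential developables of the rational normal curves $R_3, C_2$ to read off the singular members and the base locus.

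For the singular members: the pencil parameter $[\lambda:\mu]$ is $\frakA_5$-fixed, so the singular locus of every member is $\frakA_5$-stable. Apart from $\sfS_1$ and $\sfS_2$, which are singular along curves, any further singular member has an isolated $\frakA_5$-invariant singular set of cardinality at most $16$ (the Kummer bound for quartic surfaces). I would locate them by computing the discriminant of $\lambda F_1+\mu F_2$, concretely by eliminating $(x_0,\ldots,x_3)$ from the Jacobian ideal; equivalently, one can catalogue the small $\frakA_5$-orbits in $|\bbU_4|$ and test each against the pencil. Orbits of size $10$ correspond to stabilizers $\frakS_3 \subset \frakA_5$, and the condition that such an orbit be critical for $\lambda F_1+\mu F_2$ reduces to a linear constraint on $[\lambda:\mu]$, yielding exactly two values. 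A Hessian computation at one representative point, extended equivariantly, would confirm the node type.

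For the base curve: by B\'ezout $\deg B = 16$. Since $\sfS_1$ is the tangent developable of $R_3$, it has a cuspidal edge along $R_3$, meaning that the transverse singularity is an ordinary plane cusp; any smooth surface meeting $R_3$ transversally at a point $p$ therefore cuts $\sfS_1$ in a cuspidal curve at $p$. Applied to $\sfS_2$, which is smooth at the $12 = (\deg \sfS_2)(\deg R_3)$ points of $\sfS_2 \cap R_3$, this produces $12$ cusps of $B$; symmetrically $\sfS_1 \cap C_2$ contributes $12$ more. Outside $R_3 \cup C_2$ both surfaces are smooth and so is $B$, giving exactly $24$ ordinary cusps. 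Since the only $\frakA_5$-orbit of size $12$ on $R_3 \cong |\bbV|$ is the icosahedral vertex orbit (and analogously on $C_2 \cong |\bbV'|$), these cusps form precisely two $\frakA_5$-orbits of size $12$.

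For the irreducibility and the $20$-orbit: any decomposition of $B$ is $\frakA_5$-invariant, so the components are permuted in orbits whose degrees sum to $16$. Since $B \cap R_3$ is a single $12$-orbit, every component must contain either all $12$ or none of these cusps, which severely constrains the possible decompositions; a direct Gr\"obner-basis verification in MAPLE that the ideal $(F_1,F_2)$ is prime serves as a backstop. For the claimed $\frakA_5$-orbit of $20$ points on $B$: the unique $20$-orbit on $|\bbV| \simeq \bbP^1$ (dodecahedron vertices, stabilizer $\bbZ_3$) maps under $\nu_3$ to an orbit on $R_3 \subset \sfS_1$, and a direct substitution would show that these $20$ points also satisfy $F_2 = 0$, placing them on $B$. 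The main obstacle is the irreducibility argument: purely equivariant considerations do not rule out a $\frakA_5$-invariant splitting into components of degrees summing to $16$ (for instance $8 + 8$), so some explicit polynomial calculation appears unavoidable.
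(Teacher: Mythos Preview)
The paper does not actually prove this proposition; it is quoted from \cite{Cheltsov2}, and the paper only adds two short remarks afterwards identifying the $24$ cusps and the $20$-orbit geometrically. So your sketch is being compared to those remarks, not to a full argument.

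Your treatment of the cusps matches the paper's remark exactly: the $24$ cusps are the two $12$-orbits $C_1\cap \sfS_2$ and $C_2\cap \sfS_1$, arising from the cuspidal edges of the two tangent developables. Your computation of $F_2|_{R_3}$ would in fact yield $4uv(u^{10}+11u^5v^5-v^{10})=4\Phi_{12}$, i.e.\ precisely the icosahedral vertex orbit, confirming this picture. One small gap: ``outside $R_3\cup C_2$ both surfaces are smooth and so is $B$'' presumes transversality of $\sfS_1$ and $\sfS_2$ away from the singular curves, which is not automatic and needs a check.

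There is, however, a genuine error in your location of the $20$-orbit. You propose that the dodecahedral $20$-orbit on $|\bbV|$, pushed to $R_3$ via $\nu_3$, lies on $B$. But $R_3\cap B=R_3\cap \sfS_2$ is a zero-scheme of degree $\deg R_3\cdot\deg\sfS_2=12$, and you have already accounted for all of it with the $12$ cusps; there is no room for $20$ further points on $R_3$. Concretely, $F_2|_{R_3}$ vanishes at $\Phi_{12}=0$, not at $\Phi_{20}=0$, so your ``direct substitution'' would fail. The paper's identification of the $20$-orbit is entirely different: it uses the $\frakA_5$-invariant web of quadrics $\bbW_4\hookrightarrow S^2\bbU_4^\vee$ developed in the following section, observes that the ten rank-$2$ quadrics in that web have singular lines equal to the ten common secants of $C_1$ and $C_2$ lying on the Steinerian $\sfS_4$, and reads off the $20$-orbit from that configuration. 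This is genuinely extra structure that your purely pencil-based approach does not see.
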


It is easy to see these two orbits of singular points of $B$. Each of them is the intersection of one of the rational normal cubic curves $C_1$ and $C_2$ with a nonsingular surface from the pencil. If we take the latter to be the surface $\sfS_3$ or $\sfS_4$, we obtain that the union of the two orbits is invariant with respect to $\frakS_5$.

 In the next section we will be able to see explicitly the unique orbit of 20 points.
 
\section{An $\frakA_5$-invariant web of quadrics in $|\bbU_4|$}
Consider the linear $2.\frakA_5$-equivariant  map 
\beq\label{web}
\phi:\bbW_4\to S^2\bbU_4^\vee
\eeq
defined by the decomposition \eqref{decomp1}. The corresponding map of the projective spaces defines a web of quadrics in $|\bbU_4|$. Recall that some of its attributes are the determinantal surface $D(\phi)$ parametrizing singular quadrics in the web, and the Steinerian surface $\St(\phi)$ parametrizing singular points of singular quadrics from the web. 

Let us find the equation of the discriminant surface. We choose a basis of $\phi(\bbW^4)$ formed by the 
quadrics
$$Q_1 = 2x_1x_3+3x_2^2,\ Q_2 = x_0^2-2x_2x_3,\ Q_3 = x_3^2+2x_0x_1,\ Q_4 = 3x_1^2+2x_0x_2.$$
They are transformed under the representation  of $2.\frakA_5$ (with the center acting trivially) as follows
\begin{eqnarray*}
S&:&(Q_1,Q_2,Q_3,Q_4)\mapsto (\epsilon^4Q_1,\epsilon^3Q_2,\epsilon^2Q_3,\epsilon Q_4),\\
T&:&\begin{pmatrix}Q_1\\
Q_2\\
Q_3\\
Q_4\end{pmatrix} \mapsto c\begin{pmatrix}-\lambda&1&\lambda^2&\lambda\\
1&\lambda&-\lambda&\lambda^2\\
\lambda^2&-\lambda&\lambda&1\\
\lambda&\lambda^2&1&-\lambda\end{pmatrix} \begin{pmatrix}Q_1\\
Q_2\\
Q_3\\
Q_4\end{pmatrix},\\
U&:&(Q_1,Q_2,Q_3,Q_4)\mapsto (Q_4,Q_3,Q_2,Q_1),
\end{eqnarray*}
where $c$ is some constant which  will not be of concern for us.

Choose a basis of $\phi(\bbW_4) \subset S^2\bbU_4^\vee$ spanned by the following quadratic forms:
\begin{eqnarray}\label{basis1}
Q_1' &= &\epsilon^4Q_1-\epsilon^3Q_2-\epsilon^2Q_3+\epsilon Q_4,\\ \notag
Q_2' &=& -\epsilon^3Q_1-\epsilon Q_2-\epsilon^4Q_3+\epsilon^2 Q_4,\\ \notag
Q_3' &=& \epsilon^2Q_1-\epsilon^4Q_2-\epsilon Q_3+\epsilon^3 Q_4,\\ \notag
Q_4' &=& \epsilon Q_1-\epsilon^2Q_2-\epsilon^3Q_3+\epsilon^4 Q_4. \notag
\end{eqnarray}
The symmetric matrix defining the quadratic form
$y_1Q_1'+y_2Q_2'+y_3Q_3'+y_4Q_4'$ is equal to the matrix
$$A(y_1,y_2,y_3,y_4) = \begin{pmatrix}a_{11}&a_{12}&a_{13}&0\\
a_{21}&a_{22}&0&a_{24}\\
a_{31}&0&a_{33}&a_{34}\\
0&a_{42}&a_{43}&a_{44}\end{pmatrix},$$
where
\begin{eqnarray*}
a_{11} &=& -(\epsilon^3y_1+\epsilon y_2+\epsilon^4y_3+\epsilon^2 y_4,\\
a_{12}=a_{21} &= &-(\epsilon^2y_1+\epsilon^4 y_2+\epsilon y_3+\epsilon^3 y_4),\\
a_{13} = a_{31} &=& \epsilon y_1+\epsilon^2y_2+\epsilon^3 y_3+\epsilon^4 y_4,\\
a_{22} &=&3(\epsilon y_1+\epsilon^2y_2+\epsilon^3y_3+\epsilon^4 y_4),\\
a_{24}&=&a_{42}=\epsilon^4y_1+\epsilon^3y_2+\epsilon^2y_3+\epsilon y_4,\\
a_{33}&=&3(\epsilon^4y_1+\epsilon^3y_2+\epsilon^2y_3+\epsilon y_4),\\
a_{34}&=&a_{43}=\epsilon^3y_1+\epsilon y_2+\epsilon^4y_3+\epsilon^2 y_4,\\
a_{44}&=&-(\epsilon^2y_1+\epsilon^4y_2+\epsilon y_3+\epsilon^3 y_4).
\end{eqnarray*}
Plugging in these expressions and computing the determinant, we obtain 
(using MAPLE) the equation of the discriminant surface, where $y_5=-y_1-y_2-y_3-y_4$,
$$\det(A(y_1,y_2,y_3,y_4)) = 
25(\sum_{i=1}^5y_i^4-\sum_{1\le i< j\le 5}^5y_iy_j^3+
\sum_{1\le i< j\le k\le 5}y_iy_jy_k^2-3\sum_{1\le i<j<k<l\le 5}y_iy_jy_ky_l),$$
where $y_5 = -(y_1+y_2+y_3+y_4).$
We can rewrite this in terms of power functions to obtain the equation of the discriminant surface in the form
\beq\label{seven}
30 \sum_{i=1}^5y_i^4-7(\sum_{i=1}^5y_i^2)^2 = y_1+y_2+y_3+y_4+y_5 = 0.
\eeq
This is the equation of the 10-nodal surface $S_{7/30}$ from the Hashimoto pencil.\footnote{This identification of the discriminant surface with the surface $S_{7/30}$ was also confirmed to me by S. Mukai}

\begin{remark} The 10 nodes of the symmetric discriminant quartic (also known as a 
\emph{Cayley symmetroid quartic}) correspond to the singular lines of 10 reducible quadrics in 
the web. According to A. Coble \cite{Coble}, p. 250, they are the ten common secants of the rational normal 
curves $C_1$ and $C_2$. 
\end{remark}

Let us now compute the equation of the Steinerian surface of the web of quadrics defined by the map \eqref{web}. If we choose the basis of $\phi(\bbW^4)$ given by \eqref{basis1}, then the equation of the Steinerian surface is given by 
the determinant of the matrix with columns $Q_1'\cdot \bfx,\  Q_2'\cdot \bfx,\ Q_3'\cdot \bfx,\ Q_4'\cdot \bfx$, where we identify $Q_i'$ with the associated symmetric matrix and $\bfx$ is the column of the coordinates $x_0,x_1,x_2,x_3$ in $\bbU_4$:
{\small $$A = \begin{pmatrix}-\epsilon^3x_0-\epsilon^2x_1+\epsilon x_2&
 \epsilon^4x_3-\epsilon^2x_0+3\epsilon x_1&
  3\epsilon^4x_2+\epsilon^3x_3+\epsilon x_0& \epsilon^4x_1+\epsilon^3x_2-\epsilon^2x_3\\
 -\epsilon^4x_1+\epsilon^2x_2-\epsilon x_0& -\epsilon^4x_0+\epsilon^3x_3+3\epsilon^2x_1& 3\epsilon^3x_2+\epsilon^2x_0+\epsilon x_3& -\epsilon^4x_3+\epsilon^3x_1+\epsilon x_2\\ 
 -\epsilon^4x_0+\epsilon^3x_2-\epsilon x_1& 3\epsilon^3x_1+\epsilon^2x_3-\epsilon x_0& \epsilon^4x_3+\epsilon^3x_0+3\epsilon^2x_2& \epsilon^4x_2+\epsilon^2x_1-\epsilon x_3\\
 \epsilon^4x_3-\epsilon^3x_1-\epsilon^2x_0& 3\epsilon^4x_2-\epsilon^3x_0+\epsilon x_3& \epsilon^4x_1+\epsilon^2x_3+3\epsilon x_2& -\epsilon^3x_3+\epsilon^2x_2+\epsilon x_1\end{pmatrix}^t
$$}
Computing the determinant of the matrix $A$, we find that the equation of the Steinerian surface coincides with the equation of the surface $\sfS_4$ from \eqref{s3s4}.

The Steinerian surface $\sfS_4$ contains 10 lines, the singular lines of ten reducible members of the web corresponding to singular points of the discriminant surface $S_{7/30}$. These are of course the ten common secants of the two rational normal curves $C_1$ and $C_2$. The unique orbit of 20 points from the same Proposition is of course the intersection points of the 10 common secants with the curves $C_1$ and $C_2$.

\begin{remark} Recall that a minimal nonsingular model of a Cayley quartic symmetroid is isomorphic to the K3-cover of an Enriques surface. In its turn, the Enriques surface is isomorphic to a Reye congruence of bidegree $(7,3)$ in the Grassmannian $G(2,4)$ (see \cite{Cossec}). This applies to our surface $\sfS_4$. The embedding  $j:\sfS_4\hookrightarrow |\bbU_4|$ defines an invertible sheaf $\calL_1 = j^*\calO_{|\bbU_4|}(1)$. The birational morphism from $s:\sfS_4\to S_{7/30}\subset |\bbW_4|$ whose image is the quartic symmetroid $S_{7/30}$ that inverts the rational map from the discriminant surface to the Steinerian surface defines another invertible sheaf $\calL_2 = s^*\calO_{|\bbW_4|}(1)$. It is known that the covering involution of $\sfS_4$ preserves the tensor product $\calL_1\otimes \calL_2$. Since the both maps are $\frakA_5$-equivariant, we obtain that the action of $\frakA_5$ on $S_{7/30}$ descends to an action on the Enriques surface.
\end{remark}

The following is a list of open questions:
\begin{itemize}
\item Find the values of the parameters corresponding to the 10-nodal members. Are these surfaces determinantal? What is the transcendental lattice of its minimal nonsingular member.
\item Find  more facts about the $\frakS_5$-invariant surface $\sfS_3$. Is it determinantal? What is its transcendental lattice?
\item Find the transcendental lattices of the general member of the pencil.
\end{itemize}

\section{The catalecticant quartic surface}
There is another view of the Cayley symmetroid quartic  surface $S_{7/30}$. First we have the decomposition of the linear representations of $\frakA_5$
$$S^6\bbV^\vee \cong S^6\bbV \cong \bbW_4\oplus S^2\bbV.$$
Comparing this with \eqref{decomp1}, we find an isomorphism
$$S^2(\bbU_4)^\vee\cong S^6\bbV\oplus S^2\bbV'.$$
The summand $S^6\bbV$ is isomorphic to  the image of $S^2(\bbU_4)^\vee$ under the restriction to the Veronese curve $R_6$ of degree $3$. The kernel $S^2\bbV'$ is isomorphic to the (linear) space of quadrics vanishing on the Veronese curve.
Recall that each $f\in S^{2d}\bbV^\vee$ defines a linear map $a:S^d\bbV\to S^d\bbV^\vee$ (the \emph{apolarity map}). We view a basis of $S^d\bbV$ as partial derivatives of the coordinates $(u,v)$ in $S^d\bbV$ and apply the differential operator $f_d(\frac{\partial}{\partial u},\frac{\partial}{\partial v})$ to $f$ to obtain a binary form of degree $d$ in $u,v$. In these coordinates, the determinant of the map is a polynomial of degree $d+1$ in coefficients of the form $f$, called the 
\emph{catalecticant} (see \cite{CAG},1.4.1). A general zero of this polynomial is a binary form of degree $2d$ that can be written as a sum of less than the expected number (equal to $d$) of powers of linear forms. In the projective space $|S^{2d}\bbV^\vee|$ this corresponds to the  variety of $d$-secant subspaces of dimension $d-1$ of the Veronese curve $R_{2d}$. In our case  where $d = 3$ and the basis in $S^6\bbV^\vee$ is taken as in \eqref{binary}, we get a quartic polynomial
$$\textup{Cat} = \det\begin{pmatrix}a_0&a_1&a_2&a_3\\
a_1&a_2&a_3&a_4\\
a_2&a_3&a_4&a_5\\
a_3&a_4&a_5&a_6\end{pmatrix}.$$
The zeros of this polynomial in $|S^6\bbV^\vee|$ is the  variety $\textrm{Tri}(R_6)$ of trisecant planes of the Veronese curve $R_6$. Let $S = \textrm{Tri}(R_6)]\cap |\bbW_4|$, this is a quartic surface in $|\bbW_4|$. Let us see that it coincides with the Cayley symmetroid $S_{7/30}$ studied in the previous section. In coordinates,  a direct computation shows  
that binary form 
$f= \sum a_s\binom{6}{s}t_0^st_1^{6-s}\in S^6\bbV^\vee$ corresponds to the quadric 
$$Q= a_0y_0^2+a_2y_1^2+a_4y_2^2+a_6y_3^2+2a_1y_0y_1+2a_2y_0y_2+2a_3y_0y_3$$
$$+2a_5y_2y_3+2a_4y_3y_1+2a_3y_1y_2.$$
The condition that $\textup{Cat}(f) = 0$ becomes the condition that $\textup{Discr}(Q) = 0$. This shows that the catalecticant quartic becomes isomorphic to the discriminant quartic in $\bbP^6 =\bbP(N^\perp)$. The singular locus of the variety $\textrm{Tri}(R_6)$ is of degree 10 and it is isomorphic to the secant variety of $R_6$. Intersecting it with $|\bbW_4|$ we obtain 10 singular points of our symmetroid $S_{7/30}$. 

Note that  another model of $S^6\bbV$ is the space of \emph{harmonic cubics} used in \cite{Hitchin} in $|S^2\bbV|$ with respect to the dual of the fundamental conic.

 \section{$\frakA_5$-invariant rational plane sextic}
Let $\calN_1$ and $\calN_2$ be the nets of quadrics with base loci rational normal curves $C_1$ and $C_2$ defined by parametric equations \eqref{par1} and \eqref{par2}. Restricting $|\calN_2|$ to $C_1$ we obtain a map 
$C_1\to |\calN_2^\vee| \cong |S^2\bbV'{}^\vee|$. We identify the plane $|S^2\bbV'{}^\vee|$ with the plane $|S^2V'|$ via the $\frakA_5$-invariant conic. Using the basis of $\calN_2$ formed by the quadrics $V(x_0x_3+9x_1x_2), V(x_0^2+3x_2x_3)$ and $V(x_3^2-3x_0x_1)$, the map $\bbP^1\to C_1\to |S^2\bbV'{}^\vee|$ is given by 
$$(u,v) \mapsto (z_0,z_1,z_2) = (10u^3v^3,u^6+3uv^5,v^6-3u^5v).$$
Recall that the quadratic polynomials $r_1,r_2,r_3$ defining the basis are eigenvectors of $S$ with eigenvalues $(1,\epsilon^3,\epsilon^2)$ and hence proportional to Klein's dual coordinates
$(\sfA_0',\sfA_1',\sfA_2')$. Since they are also transformed as $(r_1,r_2,r_3)\mapsto (-r_1,r_3,r_2)$ under $U$, they are equal to $(c'\sfA_0',c\sfA_1',-c\sfA_2')$ for some constants $c,c'$. Also, we know that the dual conic 
$\sfA_0'{}^2+4\sfA_1'\sfA_2' = 0$ parameterizes the singular quadrics in the net, and hence the equation of $S_2$ shows that we may assume that $c' = \pm 1, c = 2$. We noticed before that in our coordinates $u,v$, we have to choose $c' = -1$. 
$$(z_0,z_1,z_2) = (-\sfA_0',2\sfA_1',-2\sfA_2').$$
The polarity isomorphism $|S^2\bbU|\to |S^2\bbU'{}^\vee|$ defined by the  conic $\sfA_0^2+\sfA_1\sfA_2 = 0$, gives  $(\sfA_0,\sfA_1,\sfA_2) = (-2\sfA_0',\sfA_2',\sfA_1')$. Thus, in the Klein coordinates, the image of $\bbP^1\to C_1\to |S^2\bbU'{}^\vee|\to |S^2\bbU'|$ is equal to the curve $\Gamma_1$ with parametric equation 
$$\Gamma_1:(u,v)\mapsto (\sfA_0,\sfA_1,\sfA_2) = (-5u^3v^3,-v^6+3u^5v,u^6+3uv^5).$$
This agrees with the parametric equation of a $\frakA_5$-invariant rational sextic curve found by R. Winger in \cite{Winger1}, \cite{Winger2}. He uses slightly different coordinates $(x,y,z) = (\sfA_1,\sfA_2,\sfA_0)$.
 
The pencil of $\frakA_5$-invariant sextics is spanned by the triple conic and the union of the six fundamental lines: 
$$\mu (xy+z^2)^3+\lambda z(x^5+y^5+z^5+5x^2y^2z-5xyz^3+z^5).$$
 According to R. Winger \cite{Winger2}, the curve $\Gamma_1$ is a unique rational curve  in this pencil with its 10 nodes forming an orbit of $\frakA_5$.  It corresponds to the parameters $(\lambda:\mu) = (27,5)$.  The equation of 
$\Gamma_1$ becomes
$$27z(x^5+y^5)+5x^3y^3+150x^2y^2z^2-120xyz^4+32z^6 = 0.$$

The action of $\frakA_5$ in the dual plane $|S^2\bbU^\vee|$ has an orbit of 6 points. The corresponding lines in the original plane $|S^2\bbU|$ are the six lines with equations
$$\sfA_0 = 0, \ \sfA_0+\epsilon^\nu \sfA_1+\epsilon^{-\nu}\sfA_2 = 0, \ \nu = 0,\ldots,4.$$
Each of these \emph{fundamental lines} intersects the curve $\Gamma_1$ at 2 points with multiplicity 3. The 12 intersection points are the images of the orbit of $\frakA_5$ acting in $\bbP^1$ that can be taken as the vertices of the icosahedron. They are the zeros of the polynomial
$$\Phi_{12} = uv(v^{10}+11u^5v^5-u^{10}).$$
The ten pairs of branches of the singular points of $\Gamma_1$ correspond to the $\frakA_5$-orbit of 20 points in $\bbP^1$. They are the zeros of the polynomial
$$\Phi_{20} = u^{20}+v^{20}+288(u^{15}v^5-u^{5}v^{15}-494u^{10}v^{10}.$$

  The dual curve $\Gamma_1^*$ of $\Gamma_1$ is a rational curve of degree 10 with parametric equation
$$(x^*,y^*,z^*) = (-10u^7v^3-5u^2v^8,5u^8v^2-10u^3v^7,u^{10}-14u^5v^5-v^{10}).$$
Its equation can be found in  \cite{Hitchin}, p. 83. Not that, via the fundamental conic, we can identify the dual planes $|S^2\bbV|$ and $|S^2\bbV^\vee|$. Thus we have the second  rational sextic curve $\Gamma_2$ and its second dual curve $\Gamma_2^*$. It  is a rational curve $\Gamma_2^*$ of degree 10 with parametric equation
$$(x,y,z) = (-10(u^7v^3-u^2v^8,10(u^8v^2-2u^3v^7,u^{10}-14u^5v^5-v^{10}).$$
The pair of these curves corresponds to the pair of rational normal curves $C_1,C_2$. These pairing of rational normal sextics is discusses in details in \cite{Coble}, Chapter 4.

Since the line $z= 0$ intersects $\Gamma_1$ at two points with multiplicity 3, the Pl\"ucker formulas show that $\Gamma_1^*$ and, hence $\Gamma_2^*$ has six $4$-fold multiple points. They are the fundamental points of $\frakA_5$.\footnote{Note that there is another $\frakA_5$-invariant rational curve of degree 10 with 36 double points, we will not be concerned with it.}

\begin{remark} The blow-up of the 10 nodes of $\Gamma_1$ (or of $\Gamma_2$) is a rational \emph{Coble surface} $C$ with $|-K_C| = \emptyset$ but $|-2K_C| \ne \emptyset$  and consists of the proper transform of the sextic $\Gamma_1$. It inherits the $\frakA_5$-symmetry of $\Gamma_1$. It is known that a Coble surface with an irreducible anti-bicanonical curve is a degenerate member of a pencil of Enriques surfaces. As I was informed by S. Mukai, there is a pencil of Enriques surfaces containing $\frakA_5$ in its automorphism group 
(acting linearly in its Fano embedding in $\bbP^5$) that contains among its members the Enriques surfaces with K3-covers $S_{1/2}$ and $S_{7/30}$ as well as the Coble surface $C$.
\end{remark} 

\begin{remark} The parametric equation of the $\frakA_5$-invariant rational sextic $\Gamma_1$ appears in \cite{Hulek2}, p. 122. It is shown there that the curve $\Gamma_1$ is equal to the intersection of the tangent lines at the origin of the modular family of elliptic quintic curves in $\bbP^4$ with  the eigenplane of the negation involution. It is also discussed in \cite{Barth}, pp. 751-752,  where it is shown that there is a $(3:1)$-map from the modular Bring's curve 
$X_0(2,5)$ (isomorphic to the intersection of the Clebsch diagonal cubic surface with the quadric $x_1^2+\cdots+x_5^2 = 0$ to the normalization of the curve $\Gamma_1$ that coincides with the forgetting map $X_0(2,5)\to X(5)$. The twelve cusps of $\Gamma_1$ are its intersection points with the fundamental conic.
\end{remark}


\begin{thebibliography}{20}
\bibitem{Barth} W. Barth, K. Hulek,  R.  Moore, \textit{Degenerations of Horrocks-Mumford surfaces}. Math. Ann. {\bf 277} (1987),  735--755.
\bibitem{Bini} G. Bini, B. van Geemen, \textit{Geometry and arithmetic of Maschke's Calabi-Yau three-fold}. Commun. Number Theory Phys. {\bf 5} (2011),  779--820.

\bibitem{vanGeemen} B. L. Cerchiai, B.  van Geemen, \textit{From qubits to $E_7$}. J. Math. Phys. {\bf 51} (2010), no. 12, 122203, 25 pp.

\bibitem{Cheltsov1} I. Cheltsov, V. Przyjalkowski, C. Shramov, \textit{Quartic double solids with icosahedral symmetry}, European Journal of Math. {\bf 2} (2016), 96--119.

\bibitem{Cheltsov2}  I. Cheltsov, C. Shramov, \textit{Two rational nodal quartic threefolds}.  Quart. J. Math. {\bf 67} (2016), 573--601.

\bibitem{Coble}	A. Coble, \textit{Algebraic geometry and theta functions}, Amer. Math. Soc. Coll. 	Publ. vol. 10, Providence, R.I., 1929 (4d ed., 1982).

\bibitem{ATLAS} J. Conway, R. Curtis,  S. Norton, R. Parker, R.  Wilson, 
\textit{Atlas of finite groups}. 
Maximal subgroups and ordinary characters for simple groups. With computational assistance from J. G. Thackaray. Clarendon Press, Oxford University Press,  1985.

\bibitem{Cossec} 	F. Cossec, \textit{Reye congruences}, Trans. Amer. Math. Soc. {\bf 280} (1983),737--751.

\bibitem{CAG} I. Dolgachev, \textit{Classical algebraic geometry. A modern view}, Cambridge Univ. Press, 2012.

\bibitem{Hashimoto} K. Hashimoto, 
\textit{Period map of a certain K3 family with an $\mathfrak{S}_5$-action}. 
With an appendix by Tomohide Terasoma. 
J. Reine Angew. Math. {\bf 652} (2011), 1--65. 

\bibitem{Hilbert} D. Hilbert, S. Cohn-Vossen, \textit{Anschauliche Geometrie}, Berlin, Springer, 1932.

\bibitem{Hitchin} N. Hitchin, 
\textit{Vector bundles and the icosahedron}.  Vector bundles and complex geometry, 71--87, Contemp. Math., 522, Amer. Math. Soc., Providence, RI, 2010. 

\bibitem{Hulek2} K. Hulek, \textit{Projective geometry of elliptic curves}. Ast\'erisque No. 137 (1986), 143 pp.

\bibitem{Hulek} K. Hulek, K. Ranestad, \textit{Abelian surfaces with two plane cubic curve fibrations and Calabi-Yau threefolds}. Complex analysis and algebraic geometry, 275--316, de Gruyter, Berlin, 2000. 

\bibitem{Klein} F. Klein, 
\textit{Vorlesungen \"Uber das Ikosaeder und die Aufl\"osung der Gleichungen vom f\"unften Grade}. LeipReprint of the 1884 original. Edited, with an introduction and commentary by Peter Slodowy. Birkh\"auser Verlag, Basel; B. G. Teubner, Stuttgart, 1993

\bibitem{Kondo} S. Kond\={o}, \textit{Enriques surfaces with finite automorphism group}, 	Japan J. Math. {\bf 12} (1986), 192--282.

\bibitem{Maschke} H. Maschke, \textit{\"Uber die quatern\"are endliche lineare Substitutions Gruppe der Borhardt'schen Moduln}, Math. Ann. {\bf 30} (1887), 486--515.

\bibitem{Mukai} S. Mukai, \textit{Finite groups of automorphisms of K3 surfaces and the Mathieu group}. Invent. Math. {\bf 94} (1988),  183--221.

\bibitem{Prokhorov}  Y. Prokhorov, \textit{Fields of invariants of finite linear groups. Cohomological and geometric approaches to rationality problems}, 245-Ð273, Progr. Math., 282, Birkh\"auser Boston, Inc., Boston, MA, 2010.

\bibitem{Winger1} R. Winger, \textit{Self-Projective rational sextics}. Amer. J. Math. {\bf 38} (1916), no. 1, 45--56.

\bibitem{Winger2} R. Winger, \textit{On the invariants of the ternary icosahedral group}. Math. Ann. {\bf 93} (1925), no. 1, 210--216.





\end{thebibliography}
 \end{document}